\newcommand{\dis}{\displaystyle}
\def\N{{\mathbb N}}
\def\B{{\mathcal B}}
\def\m{{\mathfrak m}}
\newtheorem{theorem}{Theorem}[section]
\newtheorem{lemma}[theorem]{Lemma}
\theoremstyle{definition}
\newtheorem{definition}[theorem]{Definition}
\newtheorem{proposition}[theorem]{Proposition}
\theoremstyle{remark}
\newtheorem{remark}[theorem]{Remark}
\numberwithin{equation}{section}
\begin{document}

\title{Liftings of a monomial curve}

\author{Mesut \c{S}ah\. in}
\address{Department of Mathematics,
Hacettepe University, Beytepe,  06800, Ankara, Turkey}
\email{mesut.sahin@hacettepe.edu.tr}
\date{\today}

\subjclass[2010]{primary 13A30,16S36; secondary 13P10,13D02,13C13}
\keywords{numerical semigroup rings, monomial curves, tangent cones, Betti numbers}

\commby{}

\dedicatory{}

\begin{abstract}
We study an operation, that we call lifting, creating non-isomorphic monomial curves from a single monomial curve. Our main result says that all but finitely many liftings of a monomial curve have Cohen-Macaulay tangent cones even if the tangent cone of the original curve is not Cohen-Macaulay. This implies that the Betti sequence of the tangent cone is eventually constant under this operation. Moreover, all liftings have Cohen-Macaulay tangent cones when the original monomial curve has a Cohen-Macaulay tangent cone. In this case, all the Betti sequences are nothing but the Betti sequence of the original curve.
\end{abstract}

\maketitle
\section{introduction} The main theme of this note is an operation that we call lifting and use for spreading a special property of a monomial curve within a family of infinite examples. By a monomial curve $C_S$ in the affine space $K^n$ over a field $K$, we mean a curve whose points $(t^{m_{1}},\dots, t^{m_{n}})$ varies with the parameter $t\in K$. The curve has a strong relation with the semigroup $S$ generated minimally by the positive integers $m_1,\dots,m_n$. The smallest minimal generator of $S$ is called the \textit{multiplicity} of $C_S$. The monomials $t^{m_{1}},\dots, t^{m_{n}}$ generate the integral domain $K[S]$ of $K[t]$, which is known as the \textit{numerical semigroup ring} associated to $S$. The polynomial ring $R=K[x_1,\dots,x_n]$ is graded by the semigroup $S$, by setting $\deg _S (x_i)=m_i$, giving rise to a graded map $R \rightarrow K[S]$, which sends $x_i$ to $t^{m_i}$. Its kernel denoted $I_S$ is called the \textit{toric ideal} of $S$. When $K$ is algebraically closed, the vanishing ideal $I(C_S)$ of $C_S$ is $I_S$, and thus $K[S]$ is isomorphic to the coordinate ring $R/I_S$ of the curve $C_S$. The curve $C_S$ is singular at the origin if $S\neq \N$ in which case one wants to measure how nice the singularity is. An algebraic way to understand the quality of the singularity is to look at the coordinate ring $gr_{\mathfrak{m}}(K[S])$ of the \textit{tangent cone} which is isomorphic to the ring $R/I^*_{S}$, where $\m=\langle t^{m_{1}},\dots, t^{m_{n}} \rangle$ is the maximal ideal of $K[S]$. Here $I^*_{S}$ is the ideal generated by the least degree homogeneous summands $f^{*}$ of $f$ in $I_S$. Many researchers investigated Cohen-Macaulayness of $gr_{\mathfrak{m}}(K[S])$ for this purpose, see e.g. \cite{arslan,CZ,huang,shen,shibuta}. 

It is well known that the ideal $I_S$ is generated by finitely many binomials $x_1^{u_1}\cdots x_n^{u_n}-x_1^{v_1}\cdots x_n^{v_n}$ such that $u_iv_i=0$ and $\deg_S(x_1^{u_1}\cdots x_n^{u_n})=\deg_S(x_1^{v_1}\cdots x_n^{v_n})$. In general, the ideal $I_S$ has alternative minimal generating sets of the same cardinality. If a binomial $B$ or $-B$ appears in every minimal generating set it is called \textit{indispensable}. Hence, the ideal $I_S$ has a unique minimal generating set if and only if it is generated by indispensable binomials. Uniquely generated ideals are of interest in the emerging field of algebraic statistics, see \cite{goj,tak}. Recently, indispensable binomials are also used for characterizing monomial curves in $K^4$ with Cohen-Macaulay tangent cone, see \cite{AKN,pseudo}. 

Our main aim in this note is to see how Cohen-Macaulayness spreads among liftings of a monomial curve. It is inspired by a recent paper by Herzog and Stamate (see \cite{HS}) on a similar additive operation on $S$ called shifting, which produces a semigroup generated by $m_1+k,\dots,m_n+k$ for every positive integer $k$. In that article, Herzog and Stamate show that tangent cones of shiftings are Cohen-Macaulay for all sufficiently large $k$, and that the Betti numbers of the tangent cones are eventually periodic in $k$.  The second concern of this paper is to examine how lifting effects indispensability of binomials in $I_S$ and strong indispensability of a minimal free resolution of $K[S]$. Before stating our main results let us introduce more notations. Fix a numerical semigroup $S$ generated minimally by $m_1 ,m_2,\dots ,m_n$. By a $k$-lifting $S_{k}$ of $S$ we mean the numerical semigroup generated by $$m_1,km_{2},\dots,km_n,$$ where $k$ is a positive integer with $\gcd (k,m_1)=1$. The last condition is to avoid having isomorphic liftings. In the same vein, the monomial curve $C_{k}$ corresponding to $S_{k}$ is called a $k$-lifting of $C:=C_S$. When $C$ has multiplicity $m_1$, all of its liftings will have multiplicity $m_1$. There is a closely related operation called simple gluing that have been used in literature to produce more examples with an interesting structure from a single monomial curve. Let $T$ be a semigroup generated minimally by $m_{2},\dots,m_n$. If $m_1\in T$ and $\gcd(k,m_1)=1$, then $S_k$ is called a simple gluing of $T$ and $\N$. Notice that $S_k$ is a $k$-lifting of $S_1=S$ for any $k$.  This technique has been used for the first time in \cite{watanabe} to prove that $S_k$ is symmetric if and only if $T$ is symmetric. It is used in \cite{morales} to produce monomial curves with Noetherian symbolic blow-ups and in \cite{thoma} to create monomial curves which are set theoretic complete intersections. In \cite{pf}, the authors proved that the tangent cone $gr_{\mathfrak{m}}(K[S_k])$ is Cohen-Macaulay when the same is true for the semigroup $T$ and $k\leq m_2+\cdots+m_n$. This is extended later in \cite{JZ}. Finally, it has very recently been used to study catenary degree (see \cite[Theorem 3.3]{Oneill}) which is an invariant of the semigroup measuring complexity of factorisations of elements. As $T$ is independent of $k$, all these results reveal common behavior of the liftings $S_k$, in the special case that $m_1\in T$. 

We finish the introduction by describing the structure of the paper. In the next section, we establish a one to one correspondence between the binomials in both ideals $I_S$ and $I_{S_k}$ with monomials having no common divisor, associating minimal generators of $I_S$ to those of $I_{S_k}$, and preserving indispensability, see Proposition \ref{indispensables}. In Section $3$, we show that all but finitely many liftings of a monomial curve have Cohen-Macaulay tangent cones even if the tangent cone of the original curve is not Cohen-Macaulay. Moreover, all liftings have Cohen-Macaulay tangent cones when the original monomial curve has a Cohen-Macaulay tangent cone, see Theorem \ref{liftsofCM}. In the last section, we show that the minimal free resolution of $K[S]$ is strongly indispensable if and only if the minimal free resolution of $K[S_k]$ is strongly indispensable, see Proposition \ref{free}. Finally, we prove that the Betti sequence of the tangent cone is eventually constant under this operation and that all the Betti sequences are nothing but the Betti sequence of the original tangent cone if the latter is Cohen-Macaulay, see Theorem \ref{resTC}.

\section{Indispensability}

In this section we establish the correspondence between the indispensable binomials of a monomial curve and those of its liftings. Let us recall a graph encoding minimal generators of $I_S$. Let $V( d)$ be the set of monomials of $S$-degree $d$. Denote by $G(d)$ the graph with vertices the elements of $V( d)$ and edges $\{M,N\} \subset V( d)$ such that the binomial $M-N$ is generated by binomials in $I_S$ of $S$-degree strictly smaller than $d$ with respect to $<_S$, where $s_1<_S s_2$ if $s_2-s_1 \in S$. In particular, when $\gcd(M,N) \neq 1$, $\{M,N\}$ is an edge of $G( d)$ as $M-N=\gcd(M,N)\cdot [(M-N)/\gcd(M,N)]$ and the binomial $(M-N)/\gcd(M,N)$ in $I_S$ has strictly smaller $S$-degree. A binomial of $S$-degree $d$ is indispensable if and only if $G(d)$ has two connected components which are singletons, by \cite[Corollary 2.10]{CKT}. This means that $M-N$ is indispensable exactly when 
$V( d)=\{M,N\}$ and $M-N$ is not generated by binomials in $I_S$ of $S$-degree strictly smaller than $d$.

Let $b \in S$. Every tuple $(v_1,\dots,v_n)\in \N^n$ satisfying $b=v_1 m_1+\cdots+v_n m_n$ is called a \textit{factorization} of $b$. Note that there is a one to one correspondence between factorizations $(v_1,\dots,v_n)$ of $b\in S$ and monomials $N=x_1^{v_1}\cdots x_n^{v_n}$ of $S$-degree $b$. 
The following key fact will be used many times in the sequel.
\begin{lemma}\label{semigroups}  There is a one to one correspondence between $S$ and $kS\subset S_k$ under which $m_1+S$ is mapped onto $k(m_1+S)$. Moreover, the factorization $(v_1,v_2,\dots,v_n)$ of $b\in S$ corresponds to the factorization $(kv_1,v_2,\dots,v_n)$ of $kb\in kS$.  
\end{lemma}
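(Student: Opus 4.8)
The plan is to write down the obvious map explicitly and check two things: that it is a bijection of semigroups, and that the coprimality hypothesis is exactly what makes the factorization correspondence surjective.

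First I would consider $\phi\colon S\to kS$ defined by $\phi(s)=ks$. This is clearly an injective semigroup homomorphism, and it is surjective onto $kS$ by the very definition of $kS=\{ks:s\in S\}$, so it is a bijection $S\to kS$. Since each generator $km_i$ of $kS$ lies in $S_k=\langle m_1,km_2,\dots,km_n\rangle$ — note $km_1=k\cdot m_1\in S_k$ — we indeed have $kS\subseteq S_k$. Applying $\phi$ to the coset $m_1+S$ gives $km_1+kS=k(m_1+S)$, which proves the first assertion.

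For the ``moreover'' part, fix $b\in S$ and define a map from the set of factorizations of $b$ with respect to $m_1,\dots,m_n$ by $(v_1,v_2,\dots,v_n)\mapsto(kv_1,v_2,\dots,v_n)$. That the image is a factorization of $kb$ in $S_k$ is a one-line computation:
\[
kv_1\cdot m_1+v_2\cdot(km_2)+\cdots+v_n\cdot(km_n)=k(v_1m_1+v_2m_2+\cdots+v_nm_n)=kb.
\]
This map is obviously injective. The only point requiring the hypothesis $\gcd(k,m_1)=1$ is surjectivity: given any factorization $(w_1,w_2,\dots,w_n)$ of $kb$ in $S_k$, rewrite $w_1m_1=kb-k(w_2m_2+\cdots+w_nm_n)=k(b-w_2m_2-\cdots-w_nm_n)$, so $k\mid w_1m_1$; since $\gcd(k,m_1)=1$, Euclid's lemma forces $k\mid w_1$, say $w_1=kv_1$ with $v_1\in\N$. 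Cancelling $k$ then shows $(v_1,w_2,\dots,w_n)$ is a factorization of $b$ in $S$ mapping to $(w_1,w_2,\dots,w_n)$. In terms of the monomial dictionary recalled above, this is the correspondence $x_1^{v_1}x_2^{v_2}\cdots x_n^{v_n}\mapsto x_1^{kv_1}x_2^{v_2}\cdots x_n^{v_n}$ between monomials of $S$-degree $b$ and monomials of $S_k$-degree $kb$.

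I do not anticipate a genuine obstacle; this is a bookkeeping lemma. The one subtlety worth stating explicitly is that the coprimality assumption $\gcd(k,m_1)=1$ is precisely what guarantees that the first coordinate of every factorization of $kb$ in $S_k$ is divisible by $k$, and hence what makes the factorization correspondence a bijection rather than merely an injection.
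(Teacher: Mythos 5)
Your proof is correct and follows essentially the same route as the paper's: the bijection $b\mapsto kb$ is immediate, and the only substantive point --- that every factorization of $kb$ in $S_k$ has first coordinate divisible by $k$, via $k\mid w_1m_1$ and $\gcd(k,m_1)=1$ --- is exactly the argument the paper gives. Your write-up is a little more explicit about separating the trivial set-level bijection from the factorization correspondence, but there is no mathematical difference.
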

\begin{proof} If $b \in S$, then $k b \in k S \subset S_k$. Conversely, if $k b \in S_k$, then we have $k b=v'_1 (m_1)+v_2 (k m_2)+\cdots+v_n (k m_n)$. So, $k$ divides $v'_{1}m_1$, which forces the existence of $v_{1}\in \N$ such that $v'_{1}=k v_{1}$, since $\gcd(k,m_1)=1$. Thus,
$b=v_1 m_1+\cdots+v_n m_n \in S$. So, the map sending $b\in S$ to $kb=kv_1 (m_1)+v_2 (km_2)+\cdots+v_n (km_n) \in S_k$ is one to one and onto. As $v_1\neq 0$ if and only if $kv_1\neq 0$, it follows that this map restricts to a one to one correspondence between $m_1+S$ and $k(m_1+S)$. Clearly, the factorization $(v_1,v_2,\dots,v_n)$ of $b$ corresponds to the factorization $(kv_1,v_2,\dots,v_n)$ of $kb$.
\end{proof}

It is time to describe the correspondence between the binomials in both ideals $I_S$ and $I_{S_k}$ with monomials having no common divisor. This has been noted for the first time by Morales (see \cite[Lemma 3.2]{morales}). We show that indispensable binomials are associated with indispensable ones under this correspondence. Let $B$ be a binomial in $I_S$ with monomials having no common divisor. Then, either $x_1$ divides no monomials in $B$ or it divides only one of them. In the former case, $B$ lies in $I_{S_k}$, so let $B_k:=B$. In the latter, $B=M-N$ with $M=x_2^{u_2}\cdots x_n^{u_n}$ and $N=x_1^{v_1}x_2^{v_2}\cdots x_n^{v_n}$, so we let $B_k:=M-N_k \in I_{S_k}$, where $N_k=x_1^{kv_1}x_2^{v_2}\cdots x_n^{v_n}$. In both cases, $\deg_{S_k}(B_k)=k\deg_S(B)\in kS$.
\begin{proposition} \label{indispensables} The map $\phi_k: I_S \rightarrow I_{S_k}$,  given by $B \rightarrow B_k$, is a one to one correspondence between the binomials in both ideals with monomials having no common divisor, associating minimal generators of $I_S$ to those of $I_{S_k}$, and preserving indispensability.
\end{proposition}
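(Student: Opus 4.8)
The plan is to establish the three assertions --- bijectivity, preservation of minimal generating sets, and preservation of indispensability --- in that order, with the common engine being the semigroup correspondence of Lemma \ref{semigroups} translated into the language of monomials and factorizations.

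\textbf{Bijectivity.} First I would check $\phi_k$ is well defined: given a binomial $B=M-N\in I_S$ with $\gcd(M,N)=1$, the monomials $M$ and $N$ have the same $S$-degree $b$, so by Lemma \ref{semigroups} the modified monomials $M,N_k$ (or $M,N$, if $x_1\nmid MN$) both have $S_k$-degree $kb$, whence $B_k\in I_{S_k}$; and $\gcd$ of the two monomials of $B_k$ is still $1$ since replacing $x_1^{v_1}$ by $x_1^{kv_1}$ in the single monomial that $x_1$ divides cannot create a common factor. For surjectivity, take a binomial $B'=M'-N'\in I_{S_k}$ with coprime monomials; write $M'=x_1^{a_1}x_2^{a_2}\cdots x_n^{a_n}$ and $N'=x_1^{b_1}x_2^{b_2}\cdots x_n^{b_n}$, and observe $a_1b_1=0$. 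Apply Lemma \ref{semigroups} to the $S_k$-degree $d'=\deg_{S_k}(M')$: since the $x_1$-exponents $a_1$ and $b_1$ are each forced to be divisible by $k$ (the lemma's divisibility argument applied to each factorization), we get $a_1=k\alpha_1$, $b_1=k\beta_1$, and the preimage is the binomial with $x_1$-exponents $\alpha_1,\beta_1$ and all other exponents unchanged; coprimality is preserved backwards for the same reason. Injectivity is immediate since $\phi_k$ acts on exponent vectors by an invertible rule ($v_1\mapsto kv_1$ in one coordinate). This gives the claimed bijection, and $\deg_{S_k}(B_k)=k\deg_S(B)$ was already recorded.

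\textbf{Minimal generators.} The key is that $\phi_k$ respects the notion ``$B$ is generated by binomials of strictly smaller $S$-degree,'' because the order $<_S$ on $S$ transports to $<_{S_k}$ on $kS$ under $b\mapsto kb$: indeed $b_1<_S b_2 \iff b_2-b_1\in S \iff k(b_2-b_1)\in kS \iff kb_1 <_{S_k} kb_2$ (using the first half of Lemma \ref{semigroups}). So I would argue that a relation expressing $B=M-N$ as an $R$-linear combination of lower-degree binomials in $I_S$ pushes forward, monomial by monomial, to such a relation for $B_k$ in $I_{S_k}$, and conversely pulls back --- the point being that every monomial appearing in $I_{S_k}$ in $S_k$-degrees lying in $kS$ has $x_1$-exponent divisible by $k$ (again Lemma \ref{semigroups}), so ``dividing the $x_1$-exponent by $k$'' is a well-defined operation on the entire relation, not just on $B$. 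Hence $B$ is a minimal generator of $I_S$ (equivalently, not generated by strictly-lower-degree binomials, by the standard $S$-graded Nakayama argument recalled before the statement via $G(d)$) if and only if $B_k$ is a minimal generator of $I_{S_k}$; and since the cardinality of a minimal generating set is the sum over $S$-degrees $d$ of (number of connected components of $G(d)$ minus one), $\phi_k$ carries each minimal generating set of $I_S$ bijectively onto one of $I_{S_k}$.

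\textbf{Indispensability.} Using the criterion quoted from \cite[Corollary 2.10]{CKT}, $B=M-N$ of $S$-degree $d$ is indispensable iff $V(d)=\{M,N\}$ and $G(d)$ has no edge between them. I would show $\phi_k$ induces, for each $d\in S$, a bijection $V(d)\to V_k(kd)$ (the monomials of $S_k$-degree $kd$), namely the factorization bijection of Lemma \ref{semigroups}: every factorization of $kd$ in $S_k$ has first coordinate divisible by $k$, matching factorizations $(v_1,v_2,\dots,v_n)$ of $d$ with $(kv_1,v_2,\dots,v_n)$ of $kd$. Thus $|V(d)|=|V_k(kd)|$, so $V(d)=\{M,N\}$ iff $V_k(kd)=\{M,N_k\}$; and combined with the previous paragraph's equivalence on ``generated by strictly-lower-degree binomials,'' $\{M,N\}$ is an edge of $G(d)$ iff $\{M,N_k\}$ is an edge of $G(kd)$. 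Therefore $B$ indispensable $\iff$ $B_k$ indispensable, completing the proof.

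\textbf{Main obstacle.} The one genuinely delicate point is the claim in the minimal-generators step that a full generation relation (not just the single binomial) can be transported in both directions --- i.e.\ that \emph{every} monomial occurring in the relevant $S_k$-degrees carries an $x_1$-exponent divisible by $k$, so that the substitution $x_1^{k\ell}\mapsto x_1^{\ell}$ is globally consistent. This rests squarely on the $\gcd(k,m_1)=1$ hypothesis and on Lemma \ref{semigroups}, but one must be careful that intermediate binomials in a chain of reductions have $S$-degrees lying in $kS$; restricting attention to $S$-degrees of the form $kd$ and using that $<_{S_k}$-predecessors of $kd$ within $I_{S_k}$ again lie in $kS$ (because $kd-s'\in S_k$ with $s'$ an $S_k$-degree of a binomial forces $s'\in kS$ by the same divisibility argument) is what makes this work. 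Once that bookkeeping is pinned down, everything else is a routine transport of structure along the exponent-vector bijection $v_1\mapsto kv_1$.
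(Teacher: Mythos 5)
Your proposal is correct and follows essentially the same route as the paper: use the factorization bijection of Lemma \ref{semigroups} to get surjectivity and to identify the vertex sets $V(d)$ and $V_k(kd)$, transport the graphs $G(d)\cong G_k(kd)$, and invoke the criterion of \cite[Corollary 2.10]{CKT}. Your ``main obstacle'' paragraph in fact spells out more carefully than the paper does why the generation-by-lower-degree relation transports in both directions (the paper dispatches this with ``by the first part''), but the underlying argument is the same.
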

\begin{proof}  It is clear that the assignment is one to one. We prove that it is onto by using Lemma \ref{semigroups}. Now, assume that $M-N'$ is a binomial in $I_{S_k}$, where $M=x_2^{u_2}\cdots x_n^{u_n}$ and $N'$ is a monomial corresponding to the factorization $(v'_1,v_2,\dots,v_n)$ with $v'_1>0$. So, $\deg_{S_k}(N')=\deg_{S_k}(M)=kb$ lies in $k(m_1+S)$, where $b:=\deg_{S}(M)$. Lemma \ref{semigroups} implies that the factorization $(v'_1,v_2,\dots,v_n)$ of $kb$ corresponds to the factorization $(v_1,v_2,\dots,v_n)$ of $b$, for some $v_1>0$. Therefore, there is a monomial $N=x_1^{v_1}x_2^{v_2}\cdots x_n^{v_n}$ with $S$-degree $b$ such that $N'=N_k$. So, $\phi_k(M-N)=M-N'$.

Let $G(b)$ be the graph of an $S$-degree $b$ and $G_k(d)$ be the graph of an $S_k$-degree $d$. By Lemma \ref{semigroups}, there is a one to one correspondence between the monomials of $S$-degree $b$ and monomials of $S_k$-degree $kb$. So, there is a correspondence between the vertices of the graphs $G(b)$ and 
$G_k(kb)$. By the first part, $M-N$ is generated by binomials in $I_{S}$ of $S$-degree smaller than $b$ if and only if $M-N_k$ is generated by binomials in $I_{S_k}$ of $S_k$-degree smaller than $kb$. This correspondence associates edges between $\{M\}$ and $\{N\}$ to edges between $\{M\}$ and $\{N_k\}$. As these graphs determine the minimal generators, this correspondence associates a minimal generating set of $I_S$ to a minimal generating set of $I_{S_k}$. Assuming that $B$ is an indispensable binomial of $S$-degree $b$, we get that $G(b)$ has two connected components $\{M\}$ and $\{N\}$ which are singletons by \cite[Corollary 2.10]{CKT}. Thus, $\{M\}$ and $\{N_k\}$ are the only connected components of $G_k(kb)$ and they are singletons. Hence, $B$ is indispensable if and only if $B_k$ is indispensable.
\end{proof}

\section{Cohen-Macaulayness of the tangent cone}

In this section, we study local properties of liftings of a monomial curve $C$ with multiplicity $m_1$. Recall that the $S$-degree and the usual degree of a monomial is 
$$\deg_S (x_1^{u_1}\cdots x_n^{u_n})= u_1m_1+\cdots+u_n m_n; \quad \deg (x_1^{u_1}\cdots x_n^{u_n})= u_1+\cdots+u_n.$$

\begin{lemma}[\cite{herzog}] \label{Herzog} Let $C$ be the monomial curve corresponding to $S$. Then the tangent cone of $C$ is Cohen-Macaulay if and only if for every monomial $M=x_2^{u_2}\cdots x_n^{u_n}$ with $\deg_S(M) \in m_1+S$ there exists a monomial $N=x_1^{v_1}\cdots x_n^{v_n}$ with $v_1>0$ such that 
$\deg_S M = \deg_S N$ and $ \deg M \leq \deg N. \hfill \Box$

\end{lemma}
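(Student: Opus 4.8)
The plan is to reinterpret Cohen-Macaulayness of $\mathrm{gr}_{\m}(K[S])$ as the statement that one distinguished linear form is a nonzerodivisor, and then to translate that statement into the combinatorics of factorizations. Since $K[S]$ is one-dimensional and $m_1$ is its multiplicity (equivalently, the Ap\'ery set $S\setminus(m_1+S)$ of $m_1$ in $S$ has exactly $m_1$ elements, so $\ell(K[S]/(t^{m_1}))=m_1=e(\m)$), the principal ideal $(t^{m_1})$ is a minimal reduction of $\m$. By the Valabrega--Valla criterion it then follows that $\mathrm{gr}_{\m}(K[S])$ is Cohen-Macaulay if and only if the initial form $x:=(t^{m_1})^{*}$ is a nonzerodivisor on $\mathrm{gr}_{\m}(K[S])$; note that $x$ has degree one, since the least generator $m_1$ has $\ord(m_1)=1$.

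Next I would describe $\mathrm{gr}_{\m}(K[S])$ on a monomial basis. For $s\in S$ set $\ord(s)=\max\{\,u_1+\cdots+u_n : u\in\N^n,\ u_1m_1+\cdots+u_nm_n=s\,\}$, the maximal length of a factorization of $s$. Then $\m^{i}$ is the $K$-span of $\{t^{s}:\ord(s)\ge i\}$, so $\mathrm{gr}_{\m}(K[S])$ has $K$-basis $\{\bar t^{\,s}:s\in S\}$ with $\bar t^{\,s}$ homogeneous of degree $\ord(s)$. Multiplication by $x$ carries $\bar t^{\,s}$ to $\bar t^{\,m_1+s}$ when $\ord(m_1+s)=\ord(s)+1$, and to $0$ when $\ord(m_1+s)\ge\ord(s)+2$. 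Because $\ord(m_1+s)\ge\ord(s)+1$ always holds (append one $m_1$ to a longest factorization of $s$) and $s\mapsto m_1+s$ is injective on $S$, the map is injective exactly when no such drop occurs; that is, $x$ is a nonzerodivisor if and only if $\ord(b)=\ord(b-m_1)+1$ for every $b\in m_1+S$.

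It remains to identify this last condition with the one in the lemma. Fix $b\in m_1+S$. A factorization of $b$ that uses $x_1$ becomes, after removing one $m_1$, a factorization of $b-m_1$, and vice versa, so the maximal length among the $x_1$-using factorizations of $b$ equals $1+\ord(b-m_1)$, and it is realized by a monomial $N=x_1^{v_1}\cdots x_n^{v_n}$ with $v_1>0$ and $\deg_{S}N=b$. The $x_1$-free factorizations of $b$ are precisely the monomials $M=x_2^{u_2}\cdots x_n^{u_n}$ with $\deg_{S}M=b$; write $\ord^{\circ}(b)$ for the largest $\deg M$ among them (and $\ord^{\circ}(b)=-\infty$ if there is none). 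Then $\ord(b)=\max\bigl(1+\ord(b-m_1),\,\ord^{\circ}(b)\bigr)$, hence $\ord(b)=\ord(b-m_1)+1$ holds precisely when $\deg M\le 1+\ord(b-m_1)$ for every such $M$. Running this over all $b\in m_1+S$: if the tangent cone is Cohen-Macaulay and $M$ is an $x_1$-free monomial with $\deg_{S}M\in m_1+S$, the monomial $N$ above satisfies $\deg N=1+\ord(\deg_{S}M-m_1)\ge\ord^{\circ}(\deg_{S}M)\ge\deg M$, which is the required inequality; conversely, given the lemma's hypothesis, apply it to a longest $x_1$-free $M$ of $S$-degree $b$ and note that any $x_1$-using $N'$ with $\deg_{S}N'=b$ has $\deg N'\le 1+\ord(b-m_1)$, so $\ord^{\circ}(b)=\deg M\le\deg N'\le 1+\ord(b-m_1)$ and therefore $\ord(b)=\ord(b-m_1)+1$ (for $b\in m_1+S$ with no $x_1$-free factorization the equality is automatic).

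The step I expect to cause the most friction is the first one: pinning down the equivalence between Cohen-Macaulayness of the associated graded ring of a one-dimensional ring and a minimal reduction being a nonzerodivisor there, together with checking that $(t^{m_1})$ genuinely is such a minimal reduction (i.e.\ that $m_1$ is the multiplicity). Once that reduction is in place, the remainder is bookkeeping: making $\mathrm{gr}_{\m}(K[S])$ explicit on the monomial basis and tracking maximal factorization lengths.
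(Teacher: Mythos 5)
Your argument is correct, and it is worth noting that the paper itself offers no proof of this lemma: it is quoted from Herzog's article on super-regular sequences and closed with a $\Box$. Your route is essentially the standard one underlying Herzog's criterion: reduce Cohen--Macaulayness of the one-dimensional graded ring $\mathrm{gr}_{\m}(K[S])$ to the initial form of $t^{m_1}$ being a nonzerodivisor, describe $\mathrm{gr}_{\m}(K[S])$ on the monomial basis $\{\bar t^{\,s}\}$ graded by the order function $\ord$, and observe that injectivity of multiplication by $(t^{m_1})^*$ is exactly the condition $\ord(b)=\ord(b-m_1)+1$ for all $b\in m_1+S$, which your bookkeeping with $\ord^{\circ}$ correctly identifies with the monomial condition in the statement. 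The one step I would tighten is the one you yourself flag: asserting that $(t^{m_1})$ is a minimal reduction via $\ell(K[S]/(t^{m_1}))=m_1=e(\m)$ quietly uses $e(\m)=m_1$, which is usually itself proved by exhibiting the reduction. A non-circular and elementary substitute is to check directly that $\m^{n+1}=t^{m_1}\m^{n}$ for $n\ge (n{-}1)(m_1{-}1)$ (in the notation of the paper, $n$ generators): in a longest factorization of $s$ with $\ord(s)$ large, either $m_1$ occurs, or some exponent $u_i\ge m_1$ with $i\ge 2$, and the rewriting $x_i^{m_1}\mapsto x_1^{m_i}$ — the same trick as the paper's Remark \ref{remark1} — would strictly lengthen the factorization, a contradiction. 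This simultaneously shows $\dim \mathrm{gr}_{\m}(K[S])/(x^*)=0$, which is all that the equivalence ``$\mathrm{gr}$ Cohen--Macaulay $\iff$ $x^*$ a nonzerodivisor'' actually requires in dimension one, so the appeal to Valabrega--Valla can be made entirely self-contained.
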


\begin{remark} \label{remark1} It is sufficient to check the conditions in Lemma \ref{Herzog} for monomials $M$ with $u_i< m_1$, where $i=2,\dots,n$. This is because when $u_i \geq m_1$ we have $u_i=q_im_1+r_i$ and so $$\deg_S(M)=\sum_{j=2}^{n} u_jm_j=(q_im_i)m_1+r_im_i+\sum_{j\in\{2,\dots,n\}\setminus \{i\}} u_jm_j=\deg_S(N)$$ 
for $N=x_1^{q_im_i}\cdots x_i^{r_i} \cdots x_n^{u_n}$. Since $u_i=q_im_1+r_i<q_im_i+r_i$, it follows that $$\deg (M)=\sum_{j=2}^{n} u_j<(q_im_i)+r_i+\sum_{j\in\{2,\dots,n\}\setminus \{i\}} u_j=\deg(N).$$ 
\end{remark}

\begin{theorem}\label{liftsofCM} Let $C$ be a monomial curve and $C_k$ be its $k$-th lift. If $C$ has a Cohen-Macaulay tangent cone, then $C_k$ has a Cohen-Macaulay tangent cone for all $k>1$. If not, there is a positive integer $k_0$ such that $C_k$ has a Cohen-Macaulay tangent cone for all $k\geq k_0$.
\end{theorem}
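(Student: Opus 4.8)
The plan is to reduce everything to the combinatorial criterion in Lemma~\ref{Herzog}, applied to $S_k$, and to use the correspondence of Lemma~\ref{semigroups} between factorizations over $S$ and over $S_k$. A monomial $M=x_2^{u_2}\cdots x_n^{u_n}$ has $\deg_{S_k}(M)=k\deg_S(M)$, and $\deg_{S_k}(M)\in m_1+S_k$ exactly when $\deg_S(M)\in m_1+S$ (here one uses $\gcd(k,m_1)=1$ together with Lemma~\ref{semigroups}). So the monomials $M$ one must test for $S_k$ are, up to the identification of exponent vectors, the same monomials $M$ one tests for $S$. The point is that a competitor $N$ over $S_k$ may use $x_1$ \emph{without} its $x_1$-exponent being a multiple of $k$, so $S_k$ has potentially \emph{more} competitors than $S$ does; this is exactly why liftings can acquire Cohen--Macaulayness that $C$ lacks.

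For the first statement, suppose $C$ has a Cohen--Macaulay tangent cone. By Remark~\ref{remark1} it suffices to treat $M=x_2^{u_2}\cdots x_n^{u_n}$ with $u_i<m_1$ and $\deg_S(M)\in m_1+S$. By Lemma~\ref{Herzog} applied to $S$ there is $N=x_1^{v_1}\cdots x_n^{v_n}$ with $v_1>0$, $\deg_S(M)=\deg_S(N)$ and $\deg M\le \deg N$. Via Lemma~\ref{semigroups}, the $S$-factorization of $N$ lifts to the $S_k$-factorization $N_k=x_1^{kv_1}x_2^{v_2}\cdots x_n^{v_n}$, which satisfies $\deg_{S_k}(N_k)=k\deg_S(N)=k\deg_S(M)=\deg_{S_k}(M)$ and has $x_1$-exponent $kv_1>0$. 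It remains to check $\deg M\le \deg N_k$; but $\deg N_k=\deg N+(k-1)v_1\ge \deg N\ge \deg M$, using $k>1$ and $v_1\ge 1$. Hence the criterion of Lemma~\ref{Herzog} holds for $S_k$, and $C_k$ has a Cohen--Macaulay tangent cone.

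For the second statement one must show that, past some threshold $k_0$, \emph{every} relevant $M$ acquires a good competitor over $S_k$ even if it had none over $S$. The key observation is that for $M=x_2^{u_2}\cdots x_n^{u_n}$ with $\deg_S(M)\in m_1+S$, the $S_k$-degree $\deg_{S_k}(M)=k\deg_S(M)$ can be written as $m_1\cdot v_1 + (\text{something in }S_k)$ where, as $k$ grows, one can force $v_1$ large: concretely, write $\deg_S(M)=m_1+s$ with $s\in S$, so $\deg_{S_k}(M)=k m_1 + k s$, and $k m_1+ks = (km_1)\cdot 1 + ks$ is visibly a factorization over $S_k$ with $x_1$-exponent $k m_1$? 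No — one must be careful, $km_1$ is not a generator; rather $m_1$ is, so $\deg_{S_k}(M)= m_1\cdot(k) + ks$ gives an honest $S_k$-factorization with $x_1$-exponent exactly $k$. Thus the ``trivial'' competitor $N' = x_1^{k}\cdot(\text{a monomial of }S_k\text{-degree }ks)$ exists, and the question becomes whether its usual degree eventually dominates $\deg M$. Since there are only finitely many $M$ to consider (those with all $u_i<m_1$, by Remark~\ref{remark1}), $\deg M$ is bounded by a constant $D$ independent of $k$, whereas any competitor built from $x_1^{k}$ has degree at least $k$. Choosing the competitor's remaining factor to have controlled degree — e.g. factoring $ks$ using a fixed factorization of $s$ scaled appropriately, or simply noting $ks\in S_k$ has \emph{some} factorization whose degree is $O(1)$ once we absorb excess into $x_1$ via the relations of $S_k$ — one gets $\deg N'\ge k > D\ge \deg M$ for all $k\ge k_0:=D+1$.

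The main obstacle is the last step: one needs a competitor $N'$ over $S_k$ for $M$ whose \emph{total degree} is genuinely large (at least $\deg M$), not merely one that exists set-theoretically. The clean way to arrange this is: given $M$ with $\deg_S M = m_1 + s$, $s\in S$, pick any factorization $(w_2,\dots,w_n)$ of $s$ over $\langle m_2,\dots,m_n\rangle$ (if $s$ lies in that subsemigroup) — or more robustly, observe $\deg_{S_k}M = k(m_1+s)$ and use the $S_k$-generator $m_1$ repeatedly: since $\gcd(k,m_1)=1$ there exist $a,b\ge 0$ with $a m_1 \equiv k(m_1+s) \pmod{k}$ bounded, so $k(m_1+s) - a m_1 \in kS$ for some $a$ with $0\le a < k$, and $a$ can be taken to have the same residue forcing $a\ge$ (something growing); alternatively, and most simply, use that $k(m_1+s) = (k)\,m_1 + k s$ and $ks\in kS\subset S_k$ always, giving $N'$ with $x_1$-exponent $k$ outright, and then $\deg N' \ge k + (\deg\text{ of the }ks\text{-part}) \ge k$. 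Since $\deg M\le D$ for the finitely many $M$ of interest, $k\ge D+1$ finishes it. I would write $k_0$ explicitly as $1+\max\{\deg M : M=x_2^{u_2}\cdots x_n^{u_n},\ u_i<m_1,\ \deg_S M\in m_1+S\}$.
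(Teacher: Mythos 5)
Your proof is correct, and the first half coincides with the paper's argument: lift the competitor $N$ to $N_k$ and use $\deg N_k=\deg N+(k-1)v_1\ge\deg N\ge\deg M$. For the second half you take a genuinely different route. The paper fixes a bad monomial $M$, takes among the finitely many competitors $N$ of the same $S$-degree the one whose deficit $\deg M-\deg N$ is smallest, and waits until the gain $(k-1)v_1$ in Equation (3.1) absorbs that deficit, producing a (non-explicit) threshold $k_0(M)$ for each $M$ and taking the maximum over the finitely many $M$ supplied by Remark \ref{remark1}. You instead manufacture a competitor from scratch: writing $\deg_S M=m_1+s$ with $s\in S$, the identity $\deg_{S_k}M=k\,m_1+ks$ with $ks\in kS\subset S_k$ yields $N'=x_1^{k}P$ of the correct $S_k$-degree with $\deg N'\ge k$, so $k_0:=1+\max\{\deg M\}$ works, the maximum again running over the finitely many $M$ with all $u_i<m_1$. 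This buys an explicit, uniform bound $k_0\le 1+(n-1)(m_1-1)$ that the paper does not state, at the cost of a slightly less economical competitor; both arguments are complete. (Your exposition of this step wanders through two false starts before landing on the right construction; only the final version should survive into a written proof.)

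One aside in your second paragraph is actually false and worth correcting, though nothing in your proof depends on it: you claim a competitor over $S_k$ may use $x_1$ with exponent not divisible by $k$, so that $S_k$ has ``more'' competitors than $S$. By the argument in Lemma \ref{semigroups} (using $\gcd(k,m_1)=1$), any factorization of $k\deg_S(M)$ over $S_k$ has $x_1$-exponent divisible by $k$, so the competitors for $M$ over $S_k$ are in bijection with those over $S$ via $N\mapsto N_k$. The reason liftings gain Cohen--Macaulayness is not an enlarged pool of competitors but the degree boost $(k-1)v_1$ that each existing competitor receives.
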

\begin{proof} Take a monomial $M=x_2^{u_2}\cdots x_n^{u_n}$  whose $S_k$-degree $\deg_{S_k}(M)=k\deg_{S}(M)$ lies in $m_1+S_k$. It follows from Lemma \ref{semigroups} that $\deg_{S}(M)\in m_1+S$. This means that there is a monomial $N=x_1^{v_1}x_2^{v_2}\cdots x_n^{v_n}$ such that $v_1>0$ and $\deg_{S}(M)=\deg_{S}(N)$. Clearly, we also have $\deg_{S_k}(M)=\deg_{S_k}(N_k)$, for $N_k=x_1^{kv_1}x_2^{v_2}\cdots x_n^{v_n}$. So, under the correspondence in Proposition \ref{indispensables}, the binomial $M-N$ in $I_S$ maps to $M-N_k$ in $I_{S_k}$.

On the other hand, we have
\begin{eqnarray} \label{eqn3}
\deg(N_k)-\deg(M)&=&\deg(N_k)-\deg(N)+\deg(N)-\deg(M) \\
\nonumber &=&(k-1)v_1+\deg(N)-\deg(M).
\end{eqnarray}

Now, if $C$ has a Cohen-Macaulay tangent cone, it follows from Lemma \ref{Herzog} that there is at least one $N$ among all with 
$\deg(N)-\deg(M)\geq 0$. Thus, we have $\deg(N_k)-\deg(M)\geq 0$ for the corresponding $N_k$ by Equation \ref{eqn3}. So, $C_k$ has a Cohen-Macaulay tangent cone by Lemma \ref{Herzog}. 

If $C$ does not have a Cohen-Macaulay tangent cone, then again by Lemma \ref{Herzog} there is some monomial $M$ with $\deg_{S}(M)\in m_1+S$ such that for all $N$ with $\deg_{S}(M)=\deg_{S}(N)$ we have $\deg(N)-\deg(M)< 0$. Since $S$ is a numerical semigroup, there are only finitely many monomials $N$ with the $S$-degree $\deg_{S}(M)$ for a fixed monomial $M$. Let $N_0$ be the one with the biggest degree so that $\deg(N)-\deg(M)$ is the biggest possible negative number. Then, for large enough $k_0(M)$, we have $(k_0(M)-1)v_1+\deg(N)-\deg(M)\geq 0$. Hence, it follows that $\deg(N_k)-\deg(M)\geq 0$ by Equation \ref{eqn3}, for all $k\geq k_0(M)$. By Remark \ref{remark1} it is sufficient to check the condition of Lemma \ref{Herzog} for finitely many monomials $M$. So, if we choose $k_0$ to be the maximum of all $k_0(M)$ corresponding to these monomials, then Lemma \ref{Herzog} completes the proof.
\end{proof}

\section{Minimal free resolutions}

Let $S$ be a numerical semigroup and $S_k$ be its $k$-th lift as before. In this section we discuss the relation between their homological invariants.  We start with the relation between minimal free resolutions of the semigroup rings $K[S]$ and $K[S_k]$. Recall that $R=K[x_1,\dots,x_n]$ is $S$- and $S_k$-graded, respectively, via 
$$\deg_S(x_1)=\deg_{S_k}(x_1)=m_1 \quad \mbox{and} \quad \deg_{S_k}(x_i)=k\deg_S(x_i)=k m_i, \quad i>1.$$ As indicated in \cite{numata}, a minimal $S_k$-graded free resolution of $K[S_k]$ is obtained from a minimal $S$-graded free resolution of $K[S]$ via the faithfully flat extension $f:R \rightarrow R$, defined by sending $x_1 \rightarrow x_1^{k}$ and  $x_i \rightarrow x_i$ for all $i>1$. These prove the following

\begin{proposition}\label{free} If $K[S]$ has a minimal $S$-graded free resolution given by
$$\dis 0  {\longrightarrow} \bigoplus_{j=1}^{\beta_{n-1}} R[{-b_{n-1,j}}]  {\longrightarrow} \cdots     {\longrightarrow} \bigoplus_{j=1}^{\beta_1} R[{-b_{1,j}}]   {\longrightarrow} R {\longrightarrow} K[S]{\longrightarrow}0,$$
then  $K[S_k]$ has a minimal $S_k$-graded free resolution given by
$$\dis  0  {\longrightarrow} \bigoplus_{j=1}^{\beta_{n-1}} R[{-k b_{n-1,j}}]  {\longrightarrow} \cdots     {\longrightarrow} \bigoplus_{j=1}^{\beta_1} R[{-k b_{1,j}}]   {\longrightarrow} R {\longrightarrow} K[S]{\longrightarrow}0.   $$  \hfill \mbox{ $\Box$} 
\end{proposition}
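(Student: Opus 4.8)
The plan is to realize the asserted resolution of $K[S_k]$ as the base change, along the ring map $f\colon R\to R$ with $f(x_1)=x_1^{k}$ and $f(x_i)=x_i$ for $i>1$, of the given minimal $S$-graded free resolution of $K[S]$; this base-change mechanism is the one attributed to \cite{numata}, and what remains is to keep track of the $S_k$-graded twists and of minimality. First I would record that $f$ is faithfully flat: viewing the target $R$ as a module over the source $R$ via $f$, it is free of rank $k$, with basis $1,x_1,\dots,x_1^{k-1}$ over $f(R)=K[x_1^{k},x_2,\dots,x_n]$. Consequently, applying $R\otimes_{R,f}(-)$ to the given resolution $0\to P_{n-1}\to\cdots\to P_1\to R\to K[S]\to 0$ produces an exact complex of (target) $R$-modules whose differentials have, as matrix entries, the images under $f$ of the entries of the original differentials.

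Next I would identify the terms of this complex. A monomial of $S$-degree $d=a_1m_1+\cdots+a_nm_n$ is carried by $f$ to a monomial of $S_k$-degree $ka_1m_1+a_2(km_2)+\cdots+a_n(km_n)=kd$; so $f$ rescales $S$-degrees to $S_k$-degrees by the factor $k$, and the base change of $R[-b_{i,j}]$ is $R[-kb_{i,j}]$ with its $S_k$-grading. For the cokernel, $R\otimes_{R,f}K[S]=R/f(I_S)R$, and I would check that $f(I_S)R=I_{S_k}$. The inclusion $f(I_S)R\subseteq I_{S_k}$ is immediate, since $f$ carries a binomial $M-N\in I_S$ with $\deg_S M=\deg_S N$ to $f(M)-f(N)$, whose two monomials have equal $S_k$-degree $k\deg_S M$. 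For the reverse inclusion, recall that $I_{S_k}$ is generated by binomials with coprime monomials; by the surjectivity in Proposition \ref{indispensables}, each such generator equals $B_k=\phi_k(B)$ for a binomial $B\in I_S$ with coprime monomials, and $\phi_k(B)=f(B)$ holds by the very definition of $B_k$: if $x_1$ divides $N$ in $B=M-N$, then replacing $N=x_1^{v_1}x_2^{v_2}\cdots x_n^{v_n}$ by $N_k=x_1^{kv_1}x_2^{v_2}\cdots x_n^{v_n}$ is precisely the substitution $f$. Hence $R\otimes_{R,f}K[S]\cong K[S_k]$.

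Finally I would verify minimality. By minimality of the original resolution, the differentials of the base-changed complex have entries $f(g)$ with $g\in\m=\langle x_1,\dots,x_n\rangle$, and $f(\m)\subseteq\langle x_1^{k},x_2,\dots,x_n\rangle\subseteq\m$, so those entries still lie in $\m$. Thus the base-changed complex is a minimal $S_k$-graded free resolution of $K[S_k]$; combined with the previous step it is precisely the one displayed in the statement, and in particular the Betti numbers $\beta_i$ are unchanged.

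The only step carrying genuine content is the identification $f(I_S)R=I_{S_k}$ --- that extending $I_S$ along $f$ reproduces exactly the toric ideal of the lifted semigroup --- and this is precisely what Proposition \ref{indispensables} (surjectivity of $\phi_k$), together with the standard fact that toric ideals are generated by binomials with coprime monomials, is meant to supply. Everything else is routine bookkeeping with the $S_k$-grading and with the maximal ideal $\m$.
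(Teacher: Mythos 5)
Your proposal is correct and takes the same route as the paper: the paper simply invokes the faithfully flat extension $f(x_1)=x_1^{k}$, $f(x_i)=x_i$ attributed to \cite{numata} together with the observation that $S$-degrees rescale by $k$, while you supply the details this citation is meant to cover (freeness of rank $k$ over $K[x_1^k,x_2,\dots,x_n]$, the identification $f(I_S)R=I_{S_k}$ via the surjectivity of $\phi_k$ in Proposition \ref{indispensables}, and the preservation of minimality). All of these verifications are sound.
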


It follows from Proposition \ref{indispensables} that $I_{S_k}$ has a unique minimal generating set or equivalently is generated minimally by indispensable binomials if and only if $I_S$ has the same property. This means that one of the first matrices in the resolutions of Proposition \ref{free} is unique if and only if the other is so. The corresponding notion introduced by Charalambous and Thoma (see \cite{haraJA}) for the full minimal free resolution is strong indispensability of the resolution. Recall that a resolution $({\bf F},\phi)$ is called strongly indispensable if for any graded minimal resolution $({\bf G},\theta)$, we have an injective complex map
$i\colon({\bf F},\phi)\longrightarrow({\bf G},\theta)$. As a consequence, we get the following result about indispensability of higher syzygies using Lemma \ref{semigroups}.
\begin{proposition} \label{strongly}$K[S_k]$ has a strongly indispensable minimal free resolution $\iff$ $K[S]$ has a strongly indispensable minimal free resolution.
\end{proposition}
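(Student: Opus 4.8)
The plan is to leverage Proposition~\ref{free} together with the definition of strong indispensability, exactly as Proposition~\ref{indispensables} was used to transport ordinary indispensability of the generators. The key observation is that the flat base change $f\colon R\to R$ sending $x_1\mapsto x_1^k$ and $x_i\mapsto x_i$ for $i>1$ is faithfully flat, so applying $-\otimes_R R$ (via $f$) to a minimal $S$-graded free resolution of $K[S]$ yields a minimal $S_k$-graded free resolution of $K[S_k]$, and the $S$-degree $b_{i,j}$ of each free generator is simply rescaled to the $S_k$-degree $k b_{i,j}$. Thus there is a degree-preserving dictionary between the graded pieces of the two resolutions, compatible with the differentials, mediated by Lemma~\ref{semigroups} at the level of the underlying semigroup degrees.

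First I would fix a minimal $S$-graded free resolution $({\bf F},\phi)$ of $K[S]$ and let $({\bf F}_k,\phi_k)$ denote the resolution of $K[S_k]$ obtained by applying $f$ as in Proposition~\ref{free}. I would then argue that every minimal $S_k$-graded free resolution $({\bf G},\theta)$ of $K[S_k]$ arises, up to isomorphism, by the same base change from a minimal $S$-graded free resolution $({\bf G}',\theta')$ of $K[S]$: indeed the $S_k$-graded Betti numbers of $K[S_k]$ are concentrated in degrees of $kS$ (this follows from Lemma~\ref{semigroups}, since $\mathrm{Tor}^R_i(K[S_k],K)$ in $S_k$-degree $d$ can only be nonzero when $d\in kS$, as the Koszul complex computing it only involves the variables whose $S_k$-degrees lie in $kS_k\cap$ the relevant range — more carefully, one uses that the minimal resolution $({\bf F}_k,\phi_k)$ already realizes all these Betti numbers in degrees $k b_{i,j}$), so $({\bf G},\theta)$ descends along $f$. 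Concretely, since $f$ is faithfully flat and $({\bf G},\theta)\cong f^*({\bf G}')$ with $({\bf G}',\theta')$ obtained by replacing each free generator of $S_k$-degree $d=kb$ by one of $S$-degree $b$ and each entry of $\theta$ — a monomial in $x_1^k,x_2,\dots,x_n$ times a scalar — by the corresponding monomial in $x_1,x_2,\dots,x_n$, minimality of $({\bf G},\theta)$ forces minimality of $({\bf G}',\theta')$.

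With this in place, the equivalence is immediate in both directions. If $({\bf F},\phi)$ is strongly indispensable, given any minimal $S_k$-graded resolution $({\bf G},\theta)$, write $({\bf G},\theta)\cong f^*({\bf G}',\theta')$, obtain an injective complex map $i\colon({\bf F},\phi)\hookrightarrow({\bf G}',\theta')$, and apply the exact functor $f^*$ to get an injective complex map $f^*(i)\colon({\bf F}_k,\phi_k)\hookrightarrow({\bf G},\theta)$; since $f$ is faithfully flat, $f^*$ preserves injectivity, so $({\bf F}_k,\phi_k)$ is strongly indispensable, and by Proposition~\ref{free} this is the minimal resolution of $K[S_k]$. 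Conversely, if $K[S_k]$ has a strongly indispensable minimal free resolution, then for any minimal $S$-graded resolution $({\bf G}',\theta')$ of $K[S]$ the base change $f^*({\bf G}',\theta')$ is a minimal $S_k$-graded resolution of $K[S_k]$, hence receives an injective complex map from $({\bf F}_k,\phi_k)$; I would then check that such an injection descends along $f$ to an injective complex map $({\bf F},\phi)\hookrightarrow({\bf G}',\theta')$, using that the matrix entries of the base-changed map are polynomials in $x_1^k,x_2,\dots,x_n$ that can be pulled back, and that $f$ detects zero (faithful flatness) so a nonzero descended map stays nonzero.

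The main obstacle, and the point deserving the most care, is the descent direction: showing that an arbitrary minimal $S_k$-graded free resolution of $K[S_k]$, and an arbitrary complex map between two such, actually comes from the category of $S$-graded objects over $R$ via $f$. This requires knowing that the $S_k$-graded Betti degrees of $K[S_k]$ all lie in $kS$ (so that the free modules are supported in the image of the degree-rescaling) and that the comparison maps, being homogeneous of the correct degrees, have matrix entries that are $K$-linear combinations of monomials in $x_1^k, x_2,\dots, x_n$ only — a consequence of the congruence conditions on $S_k$-degrees modulo the grading, again traceable to $\gcd(k,m_1)=1$ via Lemma~\ref{semigroups}. Once this structural descent is established, the transfer of strong indispensability in both directions is a formal consequence of exactness and faithful flatness of $f^*$.
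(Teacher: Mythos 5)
Your argument is correct, but it follows a genuinely different route from the paper. The paper's proof is essentially three lines: it invokes the numerical criterion of \cite[Lemma 19]{bafrsa}, which says that $K[S]$ has a strongly indispensable minimal free resolution if and only if $b-b'\notin S$ for all pairs $b,b'$ of $i$-th Betti degrees; since Proposition~\ref{free} gives $\B_i(S_k)=k\,\B_i(S)$ and Lemma~\ref{semigroups} (via $\gcd(k,m_1)=1$) gives $k(d-d')\in S_k\iff d-d'\in S$, the two conditions transfer immediately. You instead work straight from the definition of strong indispensability and set up a descent equivalence along the faithfully flat map $f\colon x_1\mapsto x_1^k$: every minimal $S_k$-graded resolution of $K[S_k]$, and every comparison map between two of them, has matrix entries in $K[x_1^k,x_2,\dots,x_n]$ (because the Betti degrees lie in $kS$ and $\gcd(k,m_1)=1$ forces $k\mid a_1$ in any monomial of such a degree), hence descends to the $S$-graded side, while flatness preserves and faithful flatness reflects exactness and injectivity. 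All the load-bearing points in your sketch are correctly identified and the steps you defer ("I would then check\dots") are routinely fillable, so the argument goes through. What the paper's approach buys is brevity and the reduction to a purely arithmetic condition on Betti degrees; what yours buys is self-containedness (no appeal to the external criterion) and a stronger structural statement, namely that $f^*$ induces an equivalence between the minimal graded resolutions of $K[S]$ and of $K[S_k]$ together with their comparison maps, of which the proposition is a formal consequence.
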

\begin{proof}  By \cite[Lemma 19]{bafrsa}, the algebra $K[S_k]$ has a strongly indispensable minimal free resolution $\iff$ $b-b' \notin S_k$, for all $b,b' \in \B_i(S_k)$. Proposition \ref{free} implies that $\B_i(S_k)=k \B_i(S)$ and thus $b \in \B_i(S_k) \iff b=k d$, for some $d \in \B_i(S)$. Thus, $b-b' \notin S_k$ for all $b,b' \in \B_i(S_k) \iff d-d' \notin S$ for all $d,d' \in \B_i(S)$, by Lemma \ref{semigroups}, which completes the proof by \cite[Lemma 19]{bafrsa} again.
\end{proof}

\begin{definition} For an ideal $I$, a finite subset $G \subset I$ is called a standard basis of $I$ if the least homogeneous summands of the elements of $G$ generate the $I^*$. In other words, $G \subset I$ is a standard basis of $I$, if $I^*$ is generated by $g^*$ for $g \in G $.
\end{definition} 

We use the following crucial fact in order to relate Betti numbers of tangent cones of liftings.
\begin{lemma} (\cite[Lemma 1.2]{HS}) \label{HS} Let $I$ be an ideal of $R = K[x_1,\dots,x_n]$ with $I \subset  \mathfrak{m} = (x_1,\dots,x_n)$. Suppose that $x_1$ is a non zero-divisor on $K[[x_1,\dots,x_n]]/IK[[x_1,\dots,x_n]]$. Let $\pi : R \rightarrow \bar{R}=K[x_2,\dots,x_n]$ be the $K$-algebra homomorphism with $\pi(x_1) = 0$ and $\pi(x_i) = x_i$ for $i > 1$, and set $\bar{I}=\pi(I)$. Let $g_1,\dots,g_r$ be a standard basis of $\bar{I}$ such that there exist polynomials $f_1,\dots,f_r \in I$ with $\pi(f_i) = g_i$ and $\deg (f^*_i) = \deg (g^*_i)$, for $i = 1,\dots,r$. Then,\\
(a) $f_1,\dots,f_r$ is a standard basis of $I$; \\
(b) $x_1$ is regular on $gr_{\m}(R/I)$;\\
(c) there is an isomorphism
$$gr_{\m}(R/I)/x_1 gr_{\m}(R/I) = gr_{\bar{{\m}}}(\bar{R}/\bar{I}),$$
of graded $K$-algebras, where $ \bar{{\m}} = \pi({\m}).\hfill \Box$

\end{lemma}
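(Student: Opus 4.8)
This is \cite[Lemma 1.2]{HS}. The plan is to prove the three assertions in the order (c), (b), (a); a direct attack on (a) stalls, since a priori $f_1^{*},\dots,f_r^{*}$ generate only a subideal of $I^{*}$.

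First I would prove (c). Write $\bar R=K[x_2,\dots,x_n]$, $\bar I=\pi(I)$, so that $gr_{\m}(R/I)=R/I^{*}$, $gr_{\bar{\m}}(\bar R/\bar I)=\bar R/\bar I^{*}$, and $\bar I^{*}=(g_1^{*},\dots,g_r^{*})$ by hypothesis. Since $\pi$ sends each monomial either to $0$ or to itself it never lowers the order; combined with $\pi(f_i)=g_i$ and $\deg f_i^{*}=\deg g_i^{*}=\ord g_i$ this forces $\ord\pi(f_i)=\ord f_i$, hence $\pi(f_i^{*})=(\pi f_i)^{*}=g_i^{*}$, and similarly $\pi(f^{*})\in\bar I^{*}$ for all $f\in I$, so $\pi(I^{*})\subseteq\bar I^{*}$ and $\pi$ induces a surjection $\bar\psi\colon R/I^{*}\to\bar R/\bar I^{*}$ killing the image of $x_1$. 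For the reverse inclusion of kernels I would take a homogeneous $p$ of degree $d$ with $\pi(p)\in\bar I^{*}$, write $\pi(p)=\sum_i\bar c_i g_i^{*}$ with $\bar c_i$ homogeneous, lift $\bar c_i$ to $c_i\in K[x_2,\dots,x_n]$ of the same degree, and observe that $\deg(c_if_i^{*})=(d-\deg g_i^{*})+\deg f_i^{*}=d$ — here the hypothesis $\deg f_i^{*}=\deg g_i^{*}$ is exactly what is needed. Then $p-\sum_i c_if_i^{*}$ is homogeneous of degree $d$ and lies in $\ker\pi=x_1R$, say $p-\sum_i c_if_i^{*}=x_1p'$; as $f_i^{*}\in I^{*}$ this gives $p\equiv x_1p'\pmod{I^{*}}$. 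Hence $\ker\bar\psi=x_1(R/I^{*})$, which is (c).

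Next, (b) comes out of (c) almost for free. Because $\bar R/\bar I=(R/I)/x_1(R/I)$, the isomorphism just obtained is the canonical comparison map $gr(A)/(x_1^{*})\to gr(A/x_1A)$ for $A=\widehat{R}/I\widehat{R}$ (completion does not change the associated graded), and it is an isomorphism. By the Valabrega--Valla criterion this is equivalent to $x_1A\cap\m^{n}A=x_1\m^{n-1}A$ for all $n$, which, together with the standing hypothesis that $x_1$ is a nonzerodivisor on $A$, is equivalent to $x_1^{*}$ being a nonzerodivisor on $gr(A)=gr_{\m}(R/I)$; that is (b). Finally, for (a): put $J=(f_1^{*},\dots,f_r^{*})\subseteq I^{*}$. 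From $\pi(J)=(g_1^{*},\dots,g_r^{*})=\bar I^{*}=\pi(I^{*})$ and $\ker\pi=x_1R$ one gets $J+x_1R=\pi^{-1}(\bar I^{*})=I^{*}+x_1R$, so every homogeneous $h\in I^{*}$ is $h=j+x_1p$ with $j\in J$ and $p\in R$; then $x_1p=h-j\in I^{*}$, so $p\in I^{*}$ by (b), whence $h\in J+x_1I^{*}$. Therefore $I^{*}=J+x_1I^{*}\subseteq J+\m I^{*}$, and graded Nakayama yields $I^{*}=J$, i.e. $f_1,\dots,f_r$ is a standard basis of $I$.

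The main obstacle I anticipate is seeing that the right order is (c), then (b), then (a) rather than the other way around, and, inside (b), identifying the isomorphism produced by (c) with the Valabrega--Valla comparison map so that the nonzerodivisor hypothesis can be brought to bear. Once that is in place (a) is a one-line Nakayama argument and (c) is routine once one keeps track of degrees — the equality $\deg f_i^{*}=\deg g_i^{*}$ being used crucially to keep $p-\sum c_if_i^{*}$ homogeneous in the correct degree.
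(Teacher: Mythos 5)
The paper offers no proof of this statement: it is quoted verbatim from Herzog--Stamate \cite[Lemma 1.2]{HS} (note the $\Box$ placed directly after the statement), so there is no in-paper argument to compare yours against. Judged on its own, your reconstruction is correct. The verification of (c) is sound: $\pi$ cannot lower order, so $\deg f_i^*=\deg g_i^*$ forces $\pi(f_i^*)=g_i^*$ and more generally $\pi(I^*)\subseteq\bar I^*$, and the degree count $\deg(c_if_i^*)=(d-\deg g_i^*)+\deg f_i^*=d$ is exactly what makes $p-\sum c_if_i^*$ a homogeneous element of $\ker\pi=x_1R$; since all ideals involved are graded, checking the kernel on homogeneous elements suffices. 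For (b), the identification of $\bar\psi$ with the canonical comparison map $gr_{\m}(A)/(x_1^*)\to gr(A/x_1A)$ for $A=K[[x_1,\dots,x_n]]/IK[[x_1,\dots,x_n]]$, followed by Valabrega--Valla (isomorphism $\Leftrightarrow$ $x_1A\cap\m^nA=x_1\m^{n-1}A$ for all $n$, and, in the presence of the standing nonzerodivisor hypothesis on $A$, $\Leftrightarrow$ $x_1$ regular on $gr_{\m}(A)$) is the right mechanism; this is the one step you rightly flag as the crux, and it is where the hypothesis on the power series ring is consumed. The Nakayama argument for (a) is also fine, the only point worth making explicit being that $\pi(I^*)=\bar I^*$ (you have $\bar I^*=\pi(J)\subseteq\pi(I^*)\subseteq\bar I^*$), which justifies $J+x_1R=I^*+x_1R$; one should also take the decomposition $h=j+x_1p$ with $j$ and $p$ homogeneous, which is possible because $J$ and $x_1R$ are graded, so that graded Nakayama applies to the finitely generated graded module $I^*/J$.
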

Before we state our final result, recall that the curve $C$ is of homogeneous type if $\beta_i (R/I_{S})=\beta_i (gr_{\m}(R/I_{S}))$, for all $i$.
\begin{theorem}  \label{resTC}Let $C_k$ be the $k$-th lifting of $C$. Then, there exists a positive integer $k_0$ such that for all $i=1,\dots,n-1$ and $k\geq k_0$,
 $$\beta_i (gr_{\m}(R/I_{S_k}))=\beta_i (gr_{\m}(R/I_{S_{k_0}})).$$ 
Furthermore, when the tangent cone of $C$ is Cohen-Macaulay, $C$ is of homogeneous type if and only if $C_k$ is of homogeneous type for all $k>1$.
\end{theorem}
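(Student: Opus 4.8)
The plan is to deduce the first assertion from Theorem \ref{liftsofCM}, Proposition \ref{free}, and Lemma \ref{HS}, and then to treat the homogeneous-type statement separately. First I would fix $k_0$ as in Theorem \ref{liftsofCM}, so that $C_k$ has a Cohen-Macaulay tangent cone for all $k \geq k_0$. For such $k$ the element $x_1$ is a nonzerodivisor on $gr_{\m}(K[S_k])$, hence (after passing to the completion) on $K[[x_1,\dots,x_n]]/I_{S_k}K[[\dots]]$, so Lemma \ref{HS} applies: there is an isomorphism of graded $K$-algebras $gr_{\m}(R/I_{S_k})/x_1 gr_{\m}(R/I_{S_k}) \cong gr_{\bar\m}(\bar R/\overline{I_{S_k}})$. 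Since $x_1$ is regular on $gr_{\m}(R/I_{S_k})$, the graded Betti numbers are unchanged by killing $x_1$, so $\beta_i(gr_{\m}(R/I_{S_k})) = \beta_i(gr_{\bar\m}(\bar R/\overline{I_{S_k}}))$ for all $i$.

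The next step is to identify $\overline{I_{S_k}}$ and show it does not depend on $k$ for $k \geq k_0$. Under $\pi$, each binomial $B = M - N$ with $\gcd(M,N)=1$ in $I_S$ in which $x_1 \mid N$ maps to $\pi(B_k) = M$ (when $v_1 > 0$, both $\pi(B) = M$ and $\pi(B_k) = M$), while binomials of $I_S$ not involving $x_1$ are fixed by $\phi_k$ and by $\pi$. Using the correspondence $\phi_k$ of Proposition \ref{indispensables} together with Lemma \ref{semigroups}, $\overline{I_{S_k}} = \pi(I_{S_k})$ is generated by the same monomials and binomials as $\overline{I_S} = \pi(I_S)$ — the extension $f \colon x_1 \mapsto x_1^k$, $x_i \mapsto x_i$ ($i>1$) of Proposition \ref{free} commutes with $\pi$ up to this identification, because it only rescales the exponent of $x_1$, which $\pi$ annihilates. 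Hence $\bar R/\overline{I_{S_k}}$ is, as an ungraded ring, independent of $k$; one must keep track of the grading, but the initial ideal $(\overline{I_{S_k}})^*$ governing $gr_{\bar\m}$ is built from $g_i^*$ with $\deg(g_i^*)$ controlled by Lemma \ref{HS}(ii), and these least-degree forms stabilize once $k \geq k_0$ forces every needed Herzog monomial $N$ to satisfy $\deg(N_k) \geq \deg(M)$. Thus $\beta_i(gr_{\bar\m}(\bar R/\overline{I_{S_k}}))$ is constant for $k \geq k_0$, which yields the displayed equality with $k_0$ in place of an arbitrary reference lift.

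For the homogeneous-type statement, assume the tangent cone of $C = C_1$ is Cohen-Macaulay; by Theorem \ref{liftsofCM} the same holds for every $C_k$, $k > 1$, so the argument above applies with $k_0 = 1$ and gives $\beta_i(gr_{\m}(R/I_{S_k})) = \beta_i(gr_{\m}(R/I_S))$ for all $i$ and all $k \geq 1$. On the other hand, Proposition \ref{free} shows $\beta_i(R/I_{S_k}) = \beta_i(R/I_S)$ for all $i$ (the resolution is merely regraded by $b_{i,j} \mapsto k b_{i,j}$, so the ranks are unchanged). Therefore $\beta_i(R/I_{S_k}) = \beta_i(gr_{\m}(R/I_{S_k}))$ for some/all $k$ if and only if $\beta_i(R/I_S) = \beta_i(gr_{\m}(R/I_S))$, i.e. $C_k$ is of homogeneous type for all $k > 1$ if and only if $C$ is. The main obstacle I anticipate is the bookkeeping in the second step: verifying carefully that the standard basis $f_1,\dots,f_r$ of $I_{S_k}$ produced by Lemma \ref{HS}, together with the degree condition $\deg(f_i^*) = \deg(g_i^*)$, can be chosen uniformly in $k$ for $k \geq k_0$ — that is, that the stabilization of the Herzog condition in Theorem \ref{liftsofCM} really does freeze not just Cohen-Macaulayness but the entire graded structure of $gr_{\bar\m}(\bar R/\overline{I_{S_k}})$. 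Everything else is a direct application of the cited results.
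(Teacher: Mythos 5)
Your proposal is correct and follows essentially the same route as the paper: fix $k_0$ from Theorem \ref{liftsofCM}, observe that $\pi(I_{S_k})$ is independent of $k$, verify the standard-basis hypothesis of Lemma \ref{HS} via the stabilized Herzog inequality $\deg(M)\leq\deg(N_k)$ for $k\geq k_0$, and conclude from the resulting isomorphism modulo the regular element $x_1$ that the Betti numbers of the tangent cones coincide with those of $gr_{\bar\m}(\bar R/\bar I)$; the homogeneous-type claim then follows from $k_0=1$ together with Proposition \ref{free}. The "obstacle" you flag at the end is resolved exactly as you suspect (and as the paper does): the only hypothesis of Lemma \ref{HS} needing work is the degree condition on lifts of the monomial standard-basis elements of $\bar I$, which is precisely the condition shown to hold for all $k\geq k_0$ in the proof of Theorem \ref{liftsofCM}.
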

\begin{proof}
We first notice that $\bar{I}=\pi(I_{S_k})$ is independent of the value of $k$, as $\pi(B_k)=\pi(B)=B$ if $B$ does not involve $x_1$ and $\pi(B_k)=\pi(M-N_k)=M$ if $x_1$ divides $N_k$. Being the image of ideals with binomial generators, $\bar{I}$ have a standard basis consisting of binomials and monomials. These binomials are images of themselves under $\pi$ as they do not involve the variable $x_1$. So, we need to prove that for any monomial $M=x_2^{u_2}\cdots x_n^{u_n}$ in this standard basis, there is a binomial $B_k=M-N_k$ in $I_{S_k}$ with $\pi(B_k)=M$ and $\deg(B^*_k)=\deg(M)$. Since $M\in \bar{I}$,  there is always a binomial $B_k=M-N_k$ in $I_{S_k}$ with $\pi(B_k)=M$ but the last condition is satisfied exactly when $\deg (M) \leq \deg (N_k)$. In the proof of Theorem \ref{liftsofCM}, we demonstrate that there is some positive integer $k_0$ such that for all $k\geq k_0$, $\deg (M) \leq \deg (N_k)$ is satisfied and thus the tangent cone $gr_{\m}(R/I_{S_{k}})$ is Cohen-Macaulay for all $k\geq k_0$. Therefore, hypothesis of Lemma \ref{HS} holds as $x_1$ is always regular on $K[[S_k]]$. Thus, we have the following isomorphism
$$gr_{\m}(R/I_{S_k})/x_1 gr_{\m}(R/I_{S_k}) = gr_{\bar{{\m}}}(\bar{R}/\bar{I}),$$
which implies that $$\beta_i (gr_{\m}(R/I_{S_k})/x_1 gr_{\m}(R/I_{S_k}))=\beta_i (gr_{\bar{{\m}}}(\bar{R}/\bar{I})),\quad \mbox{for all}\quad  i=1,\dots,n-1.$$ 
Since $x_1$ is not a zero-divisor on $gr_{\m}(R/I_{S_k})$, it follows that 
$$\beta_i (gr_{\m}(R/I_{S_k})/x_1 gr_{\m}(R/I_{S_k}))=\beta_i(gr_{\m}(R/I_{S_k})),\quad \mbox{for all}\quad  i=1,\dots,n-1.$$
Therefore, for all $k\geq k_0$ we have the following
 $$\beta_i (gr_{\m}(R/I_{S_k}))=\beta_i (gr_{\bar{{\m}}}(\bar{R}/\bar{I}))=\beta_i (gr_{\m}(R/I_{S_{k_0}})),\quad \mbox{for all}\quad  i=1,\dots,n-1.$$ 
 When the tangent cone $gr_{\m}(R/I_{S})$ of $C$ is Cohen-Macaulay, we have $k_0=1$ by the proof of Theorem \ref{liftsofCM}. Thus, by the first part, we have $$\beta_i (gr_{\m}(R/I_{S_k}))=\beta_i (gr_{\m}(R/I_{S})),\quad \mbox{for all}\quad  i=1,\dots,n-1$$ 
and by Proposition \ref{free}, we have $\beta_i (R/I_{S_k})=\beta_i (R/I_{S})$, for all $  i=1,\dots,n-1$. Therefore, $C$ is of homogeneous type if and only if  $C_k$ is of homogeneous type.
\end{proof}

\section*{Acknowledgements} 
The author thanks the anonymous referee for comments improving the presentation of the paper.

\bibliographystyle{amsplain}

\end{document}